 \newtheorem{theorem}{Theorem}
 \newtheorem{proposition}[theorem]{Proposition}
 \newtheorem{lemma}[theorem]{Lemma}
\theoremstyle{definition}
\theoremstyle{remark}
\newcommand{\ep}{\varepsilon}
\newcommand{\p}{\partial}
\begin{document}
\title[Dispersive Flow]{A third order dispersive flow for closed curves
\\ into almost Hermitian manifolds}
\author[H.~Chihara and E.~Onodera]{Hiroyuki Chihara and Eiji Onodera}
\address[Hiroyuki Chihara]{Mathematical Institute, Tohoku University, Sendai 980-8578, Japan}
\email{chihara@math.tohoku.ac.jp}
\address[Eiji Onodera]{Mathematical Institute, Tohoku University, Sendai 980-8578, Japan}
\email{sa3m09@math.tohoku.ac.jp}
\thanks{HC is supported by JSPS Grant-in-Aid for Scientific Research \#20540151.}
\thanks{EO is supported by JSPS Fellowships for Young Scientists and JSPS Grant-in-Aid for Scientific Research \#19$\cdot$3304.}
\subjclass[2000]{Primary 53G44; Secondary 58J40, 47G30, 35Q53}
\keywords{dispersive flow, geometric analysis, pseudodifferential calculus, energy method}
\begin{abstract}
We discuss a short-time existence theorem of solutions to 
the initial value problem for a third order dispersive flow 
for closed curves into a compact almost Hermitian manifold. 
Our equations geometrically generalize a physical model 
describing the motion of vortex filament. 
The classical energy method cannot work for this problem 
since the almost complex structure of the target manifold is not supposed 
to be parallel with respect to the Levi-Civita connection. 
In other words, a loss of one derivative arises 
from the covariant derivative of the almost complex structure. 
To overcome this difficulty, 
we introduce a bounded pseudodifferential operator 
acting on sections of the pullback bundle, 
and eliminate the loss of one derivative from 
the partial differential equation of the dispersive flow. 
\end{abstract}
\maketitle
\section{Introduction}
\label{section:introduction}
Let $(N,J,h)$ be a $2n$-dimensional compact almost Hermitian manifold 
with an almost complex structure $J$ and a Hermitian metric $h$. 
Consider the initial value problem for a third order dispersive flow of the form 
\begin{align}
  u_t
  =
  a\nabla_x^2u_x
  +
  J_u\nabla_xu_x
  +
  bh(u_x,u_x)u_x
& \quad\text{in}\quad
  \mathbb{R}{\times}\mathbb{T},
\label{equation:pde}
\\
  u(0,x)
  =
  u_0(x)
& \quad\text{in}\quad
  \mathbb{T},
\label{equation:data}
\end{align}
where 
$u$ is an unknown mapping of $\mathbb{R}\times\mathbb{T}$ to $N$, 
$(t,x)\in\mathbb{R}\times\mathbb{T}$, 
$\mathbb{T}=\mathbb{R}/\mathbb{Z}$, 
$u_t=du(\p/\p{t})$, 
$u_x=du(\p/\p{x})$, 
$du$ is the differential of the mapping $u$, 
$u_0$ is a given closed curve on $N$, 
$\nabla$ is the induced connection, 
$a,b\in\mathbb{R}$ are constant. 
$u(t)$ is a closed curve on $N$ for fixed $t\in\mathbb{R}$, 
and $u$ describes the motion of a closed curve subject to 
\eqref{equation:pde}. 
We present local expression of the covariant derivative $\nabla_x$. 
Let $y^1,\dotsc,y^{2n}$ be local coordinates of $N$, 
and let $h=\sum_{a,b=1}^{2n}h_{ab}dy^a{\otimes}dy^b$. 
We denote by $\Gamma^a_{bc}$, $a,b,c=1,\dotsc,2n$,  
the Christoffel symbol of $(N,J,h)$. 
For a smooth closed curve $u:\mathbb{T}\rightarrow{N}$, 
$\Gamma(u^{-1}TN)$ is the set of all smooth sections 
of the pullback bundle $u^{-1}TN$. 
If we express $V{\in}\Gamma(u^{-1}TN)$ as
$$
V(x)=\sum_{a=1}^{2n}V^a(x)\left(\frac{\p}{\p{y^a}}\right)_u, 
$$
then, $\nabla_xV$ is given by  
$$
\nabla_xV(x)
=
\sum_{a=1}^{2n}
\left\{
\frac{{\p}V^a}{\p{x}}(x)
+
\sum_{b,c=1}^{2n}
\Gamma^a_{bc}\bigl(u(x)\bigr)V^b(x)\frac{\p{u^c}}{\p{x}}(x)
\right\}
\left(\frac{\p}{\p{y^a}}\right)_u.
$$
\par
The equation \eqref{equation:pde} geometrically generalizes
two-sphere valued partial differential equations 
modeling the motion of vortex filament. 
In his celebrated paper \cite{darios}, 
Da Rios first formulated the motion of vortex filament as 
\begin{equation}
\vec{u}_t=\vec{u}\times\vec{u}_{xx},
\label{equation:darios}
\end{equation}
where 
$\vec{u}=(u^1,u^2,u^3)$ is an $\mathbb{S}^2$-valued function of $(t,x)$, 
$\mathbb{S}^2$ is a unit sphere in $\mathbb{R}^3$ 
with a center at the origin, 
and $\times$ is the exterior product in $\mathbb{R}^3$. 
The physical meanings of $\vec{u}$ and $x$ are 
the tangent vector and the signed arc length of vortex filament respectively. 
After eighty five years, a modified model equation of
vortex filament 
\begin{equation}
\vec{u}_t=\vec{u}\times\vec{u}_{xx}
+
a
\left[
\vec{u}_{xxx}
+
\frac{3}{2}
\left\{\vec{u}_x\times(\vec{u}\times\vec{u}_x)\right\}_x
\right]
\label{equation:FM}
\end{equation}
was proposed by Fukumoto and Miyazaki in \cite{FM}. 
When $a,b=0$, \eqref{equation:pde} generalizes \eqref{equation:darios} 
and solutions to \eqref{equation:pde} are called one-dimensional Schr\"odinger maps. 
When $b=a/2$, \eqref{equation:pde} generalizes \eqref{equation:FM}. 
\par
In recent ten years, physical models such as 
\eqref{equation:darios} and \eqref{equation:FM} 
have been generalized and studied 
from a point of view of geometric analysis in mathematics. 
The relationship between 
the geometric settings and 
the structure of such partial differential equations and their solutions 
has been recently investigated in mathematics. 
\par
The reduction of equations to simpler ones leads us to rough
understandings of their structure. 
This idea originated from Hasimoto's transform discovered in \cite{hasimoto}. 
In their pioneering work \cite{CSU}, 
Chang, Shatah and Uhlenbeck first rigorously studied the PDE structure of 
\eqref{equation:pde} when $a=b=0$, $x\in\mathbb{R}$, 
and $(N,J,h)$ is a compact Riemann surface. 
They constructed a good moving frame along the map and 
reduced \eqref{equation:pde} to a simple complex-valued 
semilinear Schr\"odinger equation 
under the assumption that 
$u(t,x)$ has a fixed base point as  $x\rightarrow+\infty$. 
Similarly, Onodera reduced 
\eqref{equation:pde} with $a\ne0$ 
and a one-dimensional fourth order dispersive flow 
to complex-valued equations in \cite{onodera2}. 
Generally speaking, these reductions require some restrictions on 
the range of the mappings, and one cannot make use of them 
to solve the initial value problem for the original equations 
without restrictions on the range of the initial data. 
\par
How to solve the initial value problem for 
such geometric dispersive equations 
is a fundamental question. 
In his pioneering work \cite{koiso}, 
Koiso first reformulated \eqref{equation:darios} geometrically, 
and proposed the equation \eqref{equation:pde} 
with $a,b=0$ and the K\"ahler condition $\nabla^NJ=0$, 
where $\nabla^N$ is the Levi-Civita connection of $(N,J,h)$. 
Moreover, Koiso established the standard short-time existence theorem, 
and proved that if $(N,J,h)$ is locally symmetric, 
that is, $\nabla^NR=0$, then the solution exists globally in time, 
where $R$ is the Riemannian curvature tensor of $N$. 
See \cite{PWW} also for one-dimensional Sch\"odinger maps. 
Recently, Onodera studied local and global existence theorems 
of \eqref{equation:pde}-\eqref{equation:data} 
in case $a\ne0$ in \cite{onodera1} and \cite{onodera3}. 
To be more precise, 
\cite{onodera1} studied the case $\nabla^NJ=0$, 
and proved a short-time existence theorem. 
Moreover, he proved that 
if $(N,J,h)$ is a compact Riemann surface 
with a constant sectional curvature $K$ 
and a condition $b=Ka/2$ is satisfied, 
then the time-local solution can be extended globally in time. 
Nishiyama and Tani proved the global existence of solutions to 
the initial value problem for \eqref{equation:FM} 
in \cite{NT} and \cite{TN}. 
Since $K=1$ for $N=\mathbb{S}^2$, 
the global existence theorem in \cite{onodera1} 
is the generalization of the results \cite{NT} and \cite{TN}. 
\cite{onodera3} studied a short-time existence theorem 
for \eqref{equation:pde}-\eqref{equation:data} 
in case that $(N,J,h)$ is a compact almost Hermitian manifold 
and $x\in\mathbb{R}$. 
Being inspired by Tarama's beautiful results on the characterization 
of $L^2$-well-posedness of the initial value problem 
for a one-dimensional linear third order dispersive equations in \cite{tarama2} 
(See also \cite{mizuhara}), 
Onodera introduced a gauge transform on the pullback bundle 
to make full use of so-called local smoothing effect of 
$e^{t\p^3/\p{x^3}}$, 
and proved a short-time existence theorem. 
\par
Both of the reduction of equations and the study of existence theorem 
are deeply connected with the relationship between 
the geometric settings of equations and 
the theory of linear dispersive partial differential equations. 
For the latter subject, see, e.g., 
\cite{chihara2}, 
\cite{doi}, 
\cite[Lecture VII]{mizohata}, 
\cite{mizuhara}, 
\cite{tarama1}, 
\cite{tarama2} 
and references therein. 
Being concerned with the compactness of the source space, 
we need to mention local smoothing effect of 
dispersive partial differential equations. 
It is well-known that solutions to the initial value problem for some 
kinds of dispersive equations gain extra smoothness in comparison with 
the initial data. 
In his celebrated work \cite{doi}, 
Doi characterized the existence of microlocal smoothing effect of
Schr\"odinger evolution equations on complete Riemannian manifolds 
according to the global behavior of the geodesic flow on the unit
cotangent sphere bundle over the source manifolds. 
Roughly speaking, 
the local smoothing effect occurs if and only if 
all the geodesics go to ``infinity''. 
For more general dispersive equations, 
the existence or nonexistence of local smoothing effect 
is determined by the global behavior of the Hamilton flow 
generated by the principal symbol of the equations. 
In particular, if the source space is compact, 
then no smoothing effect occurs 
since all the integral curves of the Hamilton vector field are trapped. 
For this reason, it is essential to study the initial value problem 
\eqref{equation:pde}-\eqref{equation:data} 
when the source space is $\mathbb{T}$ and not $\mathbb{R}$. 
\par
Here we mention the relationship between 
the K\"ahler condition $\nabla^NJ=0$ 
and the structure of the equation \eqref{equation:pde}. 
All the preceding works on \eqref{equation:pde} 
except for \cite{onodera3} 
assume that $(N,J,h)$ is a K\"ahler manifold. 
If $\nabla^NJ=0$, then \eqref{equation:pde} behaves like 
symmetric hyperbolic systems, 
and the short-time existence theorem can be proved by the classical energy method. 
See \cite{onodera1} for the detail. 
If $\nabla^NJ\ne0$, then \eqref{equation:pde} has a first 
order terms in some sense, and the classical energy method breaks down. 
\par
The purpose of the present paper is to show 
a short-time existence theorem for
\eqref{equation:pde}-\eqref{equation:data} 
without using the K\"ahler condition and the local smoothing effect. 
To state our results, we here introduce function spaces of mappings.  
For a nonnegative integer $k$, 
$H^{k+1}(\mathbb{T};TN)$ is the set of all continuous mappings 
$u:\mathbb{T}{\rightarrow}N$ satisfying 
$$
\lVert{u}\rVert_{H^{k+1}}^2
=
\sum_{l=0}^k
\int_\mathbb{T}
h\left(\nabla_x^ku_x,\nabla_x^ku_x\right)
dx
<\infty,
$$
See e.g., \cite{hebey} for the Sobolev space of mappings.  
The Nash embedding theorem shows that 
there exists an isometric embedding 
$w{\in}C^\infty(N;\mathbb{R}^d)$ with some integer $d>2n$. 
See \cite{GR}, \cite{gunther} and \cite{nash} 
for the Nash embedding theorem. 
Let $I$ be an interval in $\mathbb{R}$. 
We denote by $C(I;H^{k+1}(\mathbb{T};TN))$ the set of all 
$H^{k+1}(\mathbb{T};TN)$-valued continuous functions on $I$, 
In other words, we define it 
by the pullback of the function space as 
$C(I;H^{k+1}(\mathbb{T};TN))=C(I;{w^\ast}H^{k+1}(\mathbb{T};\mathbb{R}^d))$, 
where $H^{k+1}(\mathbb{T};\mathbb{R}^d)$ is the usual Sobolev space 
of $\mathbb{R}^d$-valued functions on $\mathbb{T}$. 
\par
Here we state our main results. 
\begin{theorem}
\label{theorem:main}
Let $k$ be a positive integer satisfying $k\geqslant4$. 
Then, for any $u_0{\in}H^{k+1}(\mathbb{T};TN)$, there exists 
$T=T(\lVert{u}\rVert_{H^{5}})>0$ such that 
{\rm \eqref{equation:pde}-\eqref{equation:data}} 
possesses a unique solution 
$u{\in}C([-T,T];H^{k+1}(\mathbb{T};TN))$.  
\end{theorem}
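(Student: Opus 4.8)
The plan is to prove the theorem by the energy method, but with the classical energy replaced by one twisted by a pseudodifferential gauge that absorbs the loss of one derivative produced by $\nabla^NJ\neq0$. I would work throughout on sections of the pullback bundle $u^{-1}TN$, using the metric $h$ and the induced connection $\nabla_x$, and realize the maps in $\mathbb{R}^d$ through the Nash isometric embedding $w$ whenever it is convenient to compare two maps or to invoke standard Sobolev theory.

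First I would set $V=\nabla_x^ku_x$ and derive the evolution equation it satisfies. Commuting $\nabla_t$ past the $k$ spatial covariant derivatives produces curvature terms built from $R(u_t,u_x)$ that are of lower order, while $\nabla_t u_x=\nabla_x u_t$ lets me substitute \eqref{equation:pde} and differentiate its right-hand side $k+1$ times. The result has the form
$$
\nabla_t V
=
a\nabla_x^3 V
+
J_u\nabla_x^2 V
+
(\text{lower-order terms}),
$$
where the remainder depends on $u$ through $h(u_x,u_x)$, the curvature $R$, and the covariant derivatives of $J$, multiplied either by $\nabla_x^jV$ with $j\le1$ or by derivatives of $u$ of order $\le k+1$, and is therefore controllable. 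The third-order operator $a\nabla_x^3$ is skew-adjoint and contributes nothing to $\tfrac{d}{dt}\lVert V\rVert_{L^2}^2$. The obstruction is the second-order term: integrating by parts and using $h(J_uW,W)=0$, its contribution to the energy is $2\int_\mathbb{T}h\bigl(\nabla_x V,(\nabla_xJ)_uV\bigr)dx$, which pairs $\nabla_x V$ with $V$ and hence cannot be bounded by $\lVert V\rVert_{L^2}^2$. This is exactly the loss of one derivative, and it vanishes precisely when $\nabla^NJ=0$.

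To remove this term I would introduce, in the spirit of Tarama's analysis of linear third-order dispersive equations \cite{tarama2} and of Onodera's gauge on the pullback bundle \cite{onodera3}, a pseudodifferential operator $\Lambda$ of order zero acting on $\Gamma(u^{-1}TN)$, whose fiber-endomorphism-valued symbol is chosen to solve the transport-type equation making the commutator $[a\nabla_x^3,\Lambda]$ cancel the second-order term $J_u\nabla_x^2$ modulo operators bounded on $L^2$. Replacing the energy $\lVert V\rVert_{L^2}^2$ by $\lVert\Lambda V\rVert_{L^2}^2$ then eliminates the loss. The core of the work is to make the pseudodifferential calculus rigorous on sections of $u^{-1}TN$ with coefficients of finite Sobolev regularity, to prove that $\Lambda$ is bounded and invertible on $L^2(u^{-1}TN)$ with norms controlled by $\lVert u\rVert_{H^5}$ so that $\lVert\Lambda V\rVert_{L^2}$ is equivalent to $\lVert V\rVert_{L^2}$, and to verify that after conjugation the energy identity reduces to
$$
\frac{d}{dt}\lVert\Lambda V\rVert_{L^2}^2
\le
C\bigl(\lVert u\rVert_{H^5}\bigr)\bigl(1+\lVert V\rVert_{L^2}^2\bigr).
$$
Since $\lVert V\rVert_{L^2}$ is the top-order part of $\lVert u\rVert_{H^{k+1}}$, this closes the a priori estimate at the $H^{k+1}$ level with lifespan governed by $\lVert u_0\rVert_{H^5}$, and the threshold $k\ge4$ reflects the number of derivatives consumed by the symbol construction and the commutator estimates.

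Finally, to produce an actual solution I would run an approximation scheme for which these bounds are uniform: regularize \eqref{equation:pde} by a dissipative fourth-order term $-\ep\,\nabla_x^4$, solve the regularized problem by standard parabolic theory, derive the gauged estimates uniformly in $\ep$, and pass to the limit $\ep\downarrow0$ using the bounds together with Aubin--Lions compactness to obtain $u\in C([-T,T];H^{k+1}(\mathbb{T};TN))$, with continuity in time at the top level recovered by a Bona--Smith type argument; uniqueness follows by applying the same gauged estimate to the difference of two solutions realized in $\mathbb{R}^d$. I expect the main obstacle to be the construction and the mapping properties of the gauge $\Lambda$ on sections of $u^{-1}TN$: making the fiberwise, matrix-valued symbol calculus rigorous with coefficients of limited regularity, arranging $\Lambda$ to be almost unitary so that it preserves the energy norm, and checking that the cancellation of the $(\nabla_xJ)_u$ term survives all the error terms generated by the curved connection and by the nonlinearity.
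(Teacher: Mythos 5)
Your overall architecture --- parabolic regularization by $-\ep\nabla_x^4$, uniform gauged energy estimates at the level of $V=\nabla_x^ku_x$, compactness, uniqueness via the difference of solutions realized in $\mathbb{R}^d$, and a separate recovery of continuity in time --- is exactly the paper's, and you correctly locate the loss of one derivative in a pairing of the form $\int_{\mathbb{T}}h\bigl((\nabla_xJ)_u\nabla_xV,V\bigr)dx$. The gap is in the design of the gauge, which is the crux of the whole argument. You propose a genuinely zeroth-order symbol solving a transport equation so that $[a\nabla_x^3,\Lambda]$ cancels the second-order term $J_u\nabla_x^2$. Two problems. First, the second-order term does not need to be cancelled: written in divergence form $\nabla_xJ_u\nabla_x$ it is skew with respect to $h$ and contributes nothing to the energy at top order; only the first-order residue with coefficient $\nabla_xJ_u$ (together with the term $k(\nabla_xJ_u)\nabla_x$ produced by the Leibniz expansion of $\nabla_x^{k+1}(J_u\nabla_xu_x)$, which you dismiss as ``controllable'' although it loses a derivative in exactly the same way) is dangerous. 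Second, and fatally on the torus, requiring a zeroth-order symbol to cancel $J_u\xi^2$ against the commutator contribution $3a\xi^2\partial_x\lambda$ forces $\lambda$ to be an $x$-antiderivative (or an exponential of an antiderivative) of $J_u/3a$, which is not $1$-periodic unless $\int_{\mathbb{T}}J_u\,dx=0$. This periodicity obstruction is precisely what distinguishes $\mathbb{T}$ from $\mathbb{R}$, where a gauge of that type and the local smoothing effect are available; the paper emphasizes that neither can be used here.

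The paper's device is different in kind: $\Lambda=1-\tilde\Lambda$ with $\tilde\Lambda=\frac{\sqrt{-1}k}{3a}J_u\sum_\alpha\Phi_\alpha(u)p(D_x)\Psi_\alpha(u)$, where $p(\xi)$ is a smooth odd cutoff of $1/\xi$ --- an order $-1$ perturbation of the identity, hence automatically bounded and invertible on $L^2$ with norm equivalent to the identity, no ellipticity or unitarity to arrange. Its commutator with $a\nabla_x^3$ is of order $3-1-1=1$ and equals $k(\nabla_xJ_u)\nabla_x$ modulo $L^2$-bounded operators, exactly cancelling the bad first-order term; the construction closes on $\mathbb{T}$ precisely because the offending coefficient $k\nabla_xJ_u$ is the $x$-derivative of the globally defined periodic tensor $kJ_u$, so no non-periodic primitive is ever needed. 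Without replacing your transport-equation gauge by such an order $-1$ operator exploiting this exact-derivative structure, the key energy estimate does not close.
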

We will prove Theorem~\ref{theorem:main} by 
the uniform energy estimates of solutions to 
a fourth order parabolic regularized equation.  
To avoid the difficulty arising from $\nabla_xJ_u$,  
we modify the method introduced for the initial value problem for 
Schr\"odinger maps of a closed Riemannian manifold to 
a compact almost Hermitian manifold in \cite{chihara3}. 
Being inspired by his own previous paper \cite{chihara1}, 
Chihara introduced a transformation of unknown mappings defined by 
a bounded pseudodifferential operator 
acting on sections of $\Gamma(u^{-1}TN)$, 
and eliminated first order terms coming from $\nabla_x{J_u}$ 
in \cite{chihara3}. 
\par
The plan of the present paper is as follows. 
Section~\ref{section:tarama} studies the well-posedness of 
an auxiliary initial value problem for 
some one-dimensional linear dispersive partial differential equations
related with \eqref{equation:pde}-\eqref{equation:data}. 
We believe that Section~\ref{section:tarama} will be very helpful 
to understand our idea of the proof of Theorem~\ref{theorem:main}, 
though the arguments and results there are nonsense from a point of view of 
the theory of linear partial differential equations. 
Section~\ref{section:proof} proves Theorem~\ref{theorem:main}. 
\section{An Auxiliary Linear Problem}
\label{section:tarama}
In this section we study the initial value problem for a one-dimensional
third order linear dispersive partial differential equation 
related with \eqref{equation:pde} of the form
\begin{alignat}{2}
  LU
  \equiv
  U_t+U_{xxx}+\sqrt{-1}\{a(x)U_x\}_x+b_x(x)U_x+c(x)U
& =
  F(t,x)
& 
  \quad\text{in}\quad
& \mathbb{R}{\times}\mathbb{T},
\label{equation:pde1}
\\
  U(0,x)
& =
  U_0(x)
& 
  \quad\text{in}\quad
& \mathbb{T},
\label{equation:data1}
\end{alignat}
where 
$U$ is a complex-valued unknown function of 
$(t,x)\in\mathbb{R}\times\mathbb{T}$, 
$a,b,c{\in}C^\infty(\mathbb{T})$, 
$\operatorname{Im} a=0$, 
$U_0(x)$ and $F(t,x)$ are given functions. 
The operator $L$ is very special in the sense that 
the coefficient of the first order term is 
a derivative of a smooth function. 
The well-posedness of the initial value problem for 
third and fourth order dispersive equations on $\mathbb{R}$ or $\mathbb{T}$ 
was studied in \cite{mizuhara}, \cite{tarama1} and \cite{tarama2}. 
In most of cases the well-posedness was characterized by the
conditions on the coefficients of differential operators. 
Let $L^2(\mathbb{T})$ be the standard Lebesgue space of 
square-integrable functions on $\mathbb{T}$, 
and let $L^1_{\text{loc}}(\mathbb{R};L^2(\mathbb{T}))$ 
be the set of all 
$L^2(\mathbb{T})$-valued locally integrable functions 
on $\mathbb{R}$. 
Mizuhara characterized the well-posedness 
of the initial value problem for a general third order dispersive
equations on $\mathbb{R}\times\mathbb{T}$. 
In view of his results in \cite[Theorem~6.1]{mizuhara}, 
one can immediately check that the special initial value problem 
\eqref{equation:pde1}-\eqref{equation:data1} is well-posed. 
\begin{proposition}
\label{theorem:mizuhara}
{\rm \eqref{equation:pde1}-\eqref{equation:data1}} is $L^2$-well-posed, 
that is, for any $U_0{\in}L^2(\mathbb{T})$ and 
for any $F{\in}L^1_{\text{loc}}(\mathbb{R};L^2(\mathbb{T}))$, 
{\rm \eqref{equation:pde1}-\eqref{equation:data1}} 
possesses a unique solution 
$U{\in}C(\mathbb{R};L^2(\mathbb{T}))$.  
\end{proposition}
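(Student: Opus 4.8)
The plan is to prove $L^2$-well-posedness of \eqref{equation:pde1}-\eqref{equation:data1} directly via the energy method, exploiting the special structure that the first-order coefficient is an exact derivative $b_x$. The key observation is that the principal part $\p_t+\p_x^3$ is skew-adjoint on $L^2(\mathbb{T})$, so the only potential obstruction to an $L^2$ energy estimate comes from the lower-order terms. First I would integrate against $U$: setting $E(t)=\lVert U(t)\rVert_{L^2}^2$, I compute $\frac{d}{dt}E(t)=2\operatorname{Re}(U_t,U)_{L^2}$ and substitute $U_t=-U_{xxx}-\sqrt{-1}\{a(x)U_x\}_x-b_x(x)U_x-c(x)U+F$. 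The cubic term $(U_{xxx},U)_{L^2}$ is purely imaginary after integration by parts (using periodicity to discard boundary terms), so it vanishes under $\operatorname{Re}$. Likewise, since $a$ is real-valued, the term $\sqrt{-1}(\{aU_x\}_x,U)_{L^2}=-\sqrt{-1}(aU_x,U_x)_{L^2}$ is purely imaginary and contributes nothing to $\operatorname{Re}$.

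The delicate term is the first-order part $(b_xU_x,U)_{L^2}$. The hard part will be controlling this without losing a derivative, and this is precisely where the special form of the coefficient matters: integrating by parts gives
\[
\operatorname{Re}(b_xU_x,U)_{L^2}
=
\frac{1}{2}\int_\mathbb{T}b_x(x)\p_x\lvert U\rvert^2\,dx
=
-\frac{1}{2}\int_\mathbb{T}b_{xx}(x)\lvert U\rvert^2\,dx,
\]
which is bounded by $\tfrac{1}{2}\lVert b_{xx}\rVert_{L^\infty}E(t)$. Together with the zeroth-order term, bounded by $\lVert c\rVert_{L^\infty}E(t)$, and the forcing term handled by Cauchy--Schwarz as $2\lvert(F,U)_{L^2}\rvert\leqslant\lVert F(t)\rVert_{L^2}^2+E(t)$, I obtain a differential inequality
\[
\frac{d}{dt}E(t)\leqslant C\,E(t)+\lVert F(t)\rVert_{L^2}^2,
\qquad
C=1+\lVert b_{xx}\rVert_{L^\infty}+2\lVert c\rVert_{L^\infty}.
\]
Grönwall's inequality then yields the a priori bound $\lVert U(t)\rVert_{L^2}\leqslant C'\bigl(\lVert U_0\rVert_{L^2}+\int_0^t\lVert F(s)\rVert_{L^2}\,ds\bigr)$ on any finite interval, which gives both uniqueness and the continuous dependence on the data. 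A parallel estimate for the backward adjoint equation $L^\ast$ supplies the dual bound needed for existence.

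To convert these a priori estimates into an actual solution $U\in C(\mathbb{R};L^2(\mathbb{T}))$, I would invoke the characterization already available to us: the proposition is stated as an immediate consequence of Mizuhara's general result \cite[Theorem~6.1]{mizuhara}, so the cleanest route is to verify that the operator $L$ satisfies the hypotheses of that theorem. Concretely, one checks that the necessary and sufficient coefficient conditions for $L^2$-well-posedness of a general third-order dispersive operator on $\mathbb{R}\times\mathbb{T}$ are met here; the structural identity $b_x\p_x = \p_x(b_x\,\cdot\,)-b_{xx}$ shows that the antisymmetric part of the subprincipal symbol is benign precisely because the first-order coefficient is a pure derivative, matching Mizuhara's condition. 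Thus I expect the main obstacle to be purely bookkeeping: translating \eqref{equation:pde1} into Mizuhara's normal form and confirming the sign and integrability conditions, rather than any genuine analytic difficulty. If a self-contained argument were preferred over citing \cite{mizuhara}, the energy estimates above, combined with a standard Galerkin or vanishing-viscosity approximation and a weak-limit extraction, would establish existence and the continuity in time directly.
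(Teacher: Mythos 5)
Your direct energy computation fails at the first-order term, and this failure is precisely the difficulty that this section (and the whole paper) is built around. The hypotheses assume only $\operatorname{Im}a=0$; the coefficient $b$ may be complex-valued. Your identity $\operatorname{Re}(b_xU_x,U)_{L^2}=\tfrac12\int_{\mathbb{T}}b_x\,\p_x\lvert U\rvert^2\,dx$ holds only when $b_x$ is real; in general
\[
\operatorname{Re}(b_xU_x,U)_{L^2}
=-\frac12\int_{\mathbb{T}}(\operatorname{Re}b)_{xx}\lvert U\rvert^2\,dx
-\int_{\mathbb{T}}(\operatorname{Im}b)_x\,\operatorname{Im}\bigl(U_x\overline{U}\bigr)\,dx,
\]
and the last integral costs one derivative of $U$: it cannot be absorbed into $C\,E(t)$, and on the compact source $\mathbb{T}$ there is no local smoothing effect to pay for it. Note that if $b$ were real, your computation would show that \emph{any} real first-order coefficient is harmless, so the ``very special'' exact-derivative structure $b_x$ emphasized in the text would carry no information; the structure matters exactly for $\operatorname{Im}(b_x)$, which must be the derivative of a periodic function (the mean-zero condition of Mizuhara and Tarama). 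This is also the case relevant to the geometric problem: the term produced by $\nabla_xJ_u$ is skew-symmetric with respect to $h$, so its scalar model is $\sqrt{-1}\beta(x)\p_x$ with real $\beta$ --- purely imaginary coefficient, the case your argument does not cover.

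The paper's proof removes the first-order term \emph{before} estimating: it conjugates $L$ by the bounded pseudodifferential operator $\lambda(x,D_x)=1-\frac{\sqrt{-1}}{3}b(x)p(D_x)$, with $p(\xi)$ essentially $\xi^{-1}$. The commutator $-[\tilde{\lambda}(x,D_x),\p_x^3]$ equals $-b_x(x)\p_x$ modulo $L^2$-bounded operators and cancels $b_x\p_x$ exactly, after which the equivalent norm $\mathcal{N}(U)^2=\lVert\lambda U\rVert^2+\lVert\langle D_x\rangle^{-1}U\rVert^2$ obeys a Gronwall inequality in both time directions. (A multiplier $e^{\phi(x)}$ cannot serve here: commuting it with $\p_x^3$ creates a second-order error, which is why a symbol of order $-1$ is needed.) Your fallback of citing \cite[Theorem~6.1]{mizuhara} does yield the statement --- the paper says so explicitly --- but the section exists to give the self-contained gauge-transform proof as a model for Theorem~\ref{theorem:main}, and your sketch of the verification of Mizuhara's condition would in any case have to be made precise at exactly the point your energy argument glosses over, namely that $\operatorname{Im}(b_x)$ has a periodic antiderivative. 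To repair the argument along the paper's lines you must introduce the order $-1$ gauge transform (or an equivalent device) and only then run the $L^2$ estimate.
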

All the descriptions in the present section are meaningless 
from a viewpoint of the general theory of 
linear partial differential equations. 
However, the purpose of this section is to illustrate 
our idea of the proof of Theorem~\ref{theorem:main} 
by showing the special proof of Proposition~\ref{theorem:mizuhara}. 
In what follows we make use of 
an elementary theory of pseudodifferential operators on $\mathbb{R}$. 
See \cite{kumano-go} for instance. 
In view of the idea in \cite[Section~2]{kpv}, 
one can deal with pseudodifferential operators on $\mathbb{T}$ 
in the same way as those on $\mathbb{R}$ 
without using the general theory of 
pseudodifferential operators on manifolds. 
$C^\infty(\mathbb{T})$ is regarded 
as the set of all $1$-periodic smooth functions on $\mathbb{R}$.  
Its topological dual is 
the set of all $1$-periodic tempered distributions on $\mathbb{R}$. 
\par
Let $p(\xi)$ be a real-valued smooth odd function on $\mathbb{R}$ 
satisfying $p(\xi)=1/\xi$ for $\xi\in\mathbb{R}\setminus(-2,2)$ 
and $p(\xi)=0$ for $\xi\in[-1,1]$. 
A pseudodifferential operator $p(D_x)$ is defined by 
an oscillatory integral of the form 
$$
p(D_x)u(x)
=
\frac{1}{2\pi}
\iint_{\mathbb{R}\times\mathbb{R}}
e^{\sqrt{-1}(x-y)\xi}
p(\xi)
U(y)
dyd\xi
\quad\text{for}\quad
U\in\mathscr{B}^\infty(\mathbb{R}),
$$
where 
$D_x=-\sqrt{-1}\p/\p{x}$, 
$\mathscr{B}^\infty(\mathbb{R})$ is the set of all
bounded $C^\infty$-functions on $\mathbb{R}$ 
whose derivative of any order is also bounded in $\mathbb{R}$. 
It is well-known that 
$p(D_x)$ is well-defined on $\mathscr{B}^\infty(\mathbb{R})$ 
and extended on the set of all tempered distributions on $\mathbb{R}$. 
$-\sqrt{-1}p(D_x)$ is an essential realization of 
the integral over $(-\infty,x]$ by pseudodifferential operators. 
The important properties of $p(D_x)$ are the following. 
\begin{lemma}
\label{theorem:psdo}
If $U(x)$ is real-valued and $1$-periodic, then 
so is $\sqrt{-1}p(D_x)U(x)$.  
\end{lemma}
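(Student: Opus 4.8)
The plan is to establish the two assertions---$1$-periodicity and real-valuedness of $\sqrt{-1}\,p(D_x)U$---separately, reading each off as a symmetry property of the Fourier multiplier $\sqrt{-1}\,p(D_x)$. The cleanest route is to note that $p(D_x)$ acts diagonally on the Fourier expansion of a $1$-periodic function. Since $D_x e^{2\pi\sqrt{-1}\,nx}=2\pi n\,e^{2\pi\sqrt{-1}\,nx}$, each character $e^{2\pi\sqrt{-1}\,nx}$ is an eigenfunction of $p(D_x)$ with eigenvalue $p(2\pi n)$. Writing $U(x)=\sum_{n\in\mathbb{Z}}\hat{U}_n e^{2\pi\sqrt{-1}\,nx}$ for the Fourier series of the $1$-periodic $U$, I would first record the identity
\[
\sqrt{-1}\,p(D_x)U(x)=\sum_{n\in\mathbb{Z}}\sqrt{-1}\,p(2\pi n)\,\hat{U}_n\,e^{2\pi\sqrt{-1}\,nx}.
\]

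For the periodicity, this expression is a superposition of the $1$-periodic characters $e^{2\pi\sqrt{-1}\,nx}$, hence manifestly $1$-periodic. Equivalently, $p(D_x)$ commutes with the translation $\tau_1\colon U(\cdot)\mapsto U(\cdot+1)$ because it is a Fourier multiplier, so $\tau_1 U=U$ forces $\tau_1 p(D_x)U=p(D_x)U$. I would present whichever of these two formulations fits the pseudodifferential framework of the section more smoothly.

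For the real-valuedness, the essential point is the interplay between the oddness of $p$ and the prefactor $\sqrt{-1}$. Since $U$ is real, its Fourier coefficients satisfy $\overline{\hat{U}_n}=\hat{U}_{-n}$; since $p$ is real-valued and odd, $p(2\pi n)$ is real and $p(-2\pi n)=-p(2\pi n)$. Taking the complex conjugate of the displayed series and reindexing $n\mapsto-n$ then reproduces the same series, so $\overline{\sqrt{-1}\,p(D_x)U}=\sqrt{-1}\,p(D_x)U$. In multiplier language, $m(\xi)=\sqrt{-1}\,p(\xi)$ obeys the Hermitian symmetry $m(-\xi)=\overline{m(\xi)}$ that characterizes real-valuedness-preserving multipliers, and the two hypotheses on $p$ are exactly what supply it.

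The only genuine care needed---and the main, if mild, obstacle---is that $p(D_x)$ was defined by an oscillatory integral over $\mathbb{R}$ acting on $\mathscr{B}^\infty(\mathbb{R})$, whereas a $1$-periodic $U$ is naturally a $1$-periodic tempered distribution rather than an element of a decaying class. I would therefore invoke the extension of $p(D_x)$ to tempered distributions noted just above, together with the periodic calculus in the spirit of \cite[Section~2]{kpv}, to justify evaluating $p(D_x)$ termwise on the Fourier series. Once that justification is in place, both assertions follow immediately from the symmetry computations above.
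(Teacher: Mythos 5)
Your proof is correct, but it runs through the discrete Fourier picture where the paper stays with the oscillatory integral. For periodicity the two arguments essentially coincide: the paper checks invariance under the translation $x\mapsto x+1$ directly in the integral, which is exactly your ``Fourier multipliers commute with $\tau_1$'' remark. For real-valuedness the paper computes $\operatorname{Im}\{\sqrt{-1}\,p(D_x)U\}=\operatorname{Re}\{p(D_x)U\}$ inside the oscillatory integral and observes that the integrand $\cos\{(x-y)\xi\}\,p(\xi)U(y)$ is odd in $\xi$ (cosine even, $p$ odd, $U$ real), so the $\xi$-integral vanishes; you instead diagonalize on the characters $e^{2\pi\sqrt{-1}nx}$ and verify the Hermitian symmetry $m(-\xi)=\overline{m(\xi)}$ of $m=\sqrt{-1}\,p$ via $\overline{\hat U_n}=\hat U_{-n}$. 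Both hinge on precisely the same two hypotheses ($p$ real and odd), so neither buys more generality; the paper's version has the small advantage of never leaving the $\mathscr{B}^\infty(\mathbb{R})$ framework in which $p(D_x)$ was defined, whereas yours requires the (easy, and correctly flagged by you) justification that the multiplier acts termwise on the Fourier series of a periodic function, in the spirit of \cite[Section~2]{kpv}. Your version does make the mechanism more transparent and transfers directly to the $L^2(\mathbb{T})$ setting used later, so it is a perfectly acceptable alternative.
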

\begin{proof}
Let $U{\in}C^\infty(\mathbb{T})$. 
We can easily check that $p(D_x)u(x)$ is $1$-periodic 
by using a translation $x{\mapsto}x+1$. 
Suppose that $U(x)$ is real-valued in addition. 
Then, 
\begin{align*}
  \operatorname{Im} 
  \left\{\sqrt{-1}p(D_x)U(x)\right\}
& =
  \operatorname{Re} 
  \left\{p(D_x)U(x)\right\}
\\
& =
  \frac{1}{2\pi}
  \iint_{\mathbb{R}\times\mathbb{R}}
  \operatorname{Re}
  \left\{
  e^{\sqrt{-1}(x-y)\xi}
  p(\xi)
  U(y)
  \right\}
  dyd\xi
\\
& =
  \frac{1}{2\pi}
  \iint_{\mathbb{R}\times\mathbb{R}}
  \cos\{(x-y)\xi\}
  p(\xi)
  U(y)
  dyd\xi=0
\end{align*}
since the integrand in the last integral above is an odd function in $\xi$. 
\end{proof}
Our special proof of Proposition~\ref{theorem:mizuhara} uses 
a bounded pseudodifferential operator defined by 
$$
\lambda(x,D_x)=1-\tilde{\lambda}(x,D_x), 
\quad
\tilde{\lambda}(x,\xi)
=
\frac{\sqrt{-1}}{3}b(x)p(\xi). 
$$
Roughly speaking, $\lambda(x,D_x)$ is 
a linear automorphism on $L^2(\mathbb{T})$. 
Indeed, it is easy to see that there exists a constant $M>1$ 
depending on $b(x)$ and $p(\xi)$ such that 
\begin{equation}
M^{-1}\lVert{U}\rVert
\leqslant
\mathcal{N}(U)
\leqslant
M\lVert{U}\rVert
\quad\text{for any}\quad 
U{\in}L^2(\mathbb{T}),
\label{equation:auto}
\end{equation}
where 
$\mathcal{N}(U)^2=\lVert\lambda(x,D_x)U\rVert^2+\lVert\langle{D_x}\rangle^{-1}U\rVert^2$, 
$\langle{D_x}\rangle=(1-\p^2/\p{x}^2)^{1/2}$, 
and 
$\lVert\cdot\rVert$ is the norm of $L^2(\mathbb{T})$. 
We prove Proposition~\ref{theorem:mizuhara} by using a transform 
$U\mapsto\lambda(x,D_x)U$ as follows. 
\begin{proof}[Sketch of proof of Proposition~\ref{theorem:mizuhara}] 
It suffices to show forward and backward energy inequalities. 
See \cite[Section~23.1]{hoermander} for instance. 
We obtain only an energy inequality in the positive direction in $t$. 
The backward one can be obtained similarly. 
A direct computation shows that 
\begin{align}
  \lambda(x,D_x)L
& =
  (\p_t+\p_x^3+\sqrt{-1}\p_xa(x)\p_x)\lambda(x,D_x)
\nonumber
\\
& \quad
  -
  [\tilde{\lambda}(x,D_x),\p_x^3]
  +
  b_x(x)\p_x+r_1(x,D_x),
\label{equation:saotome}
\\
  r_1(x,D_x)
& =
  -
  [\tilde{\lambda}(x,D_x),\sqrt{-1}\p_xa(x)p_x]
  -
  \tilde{\lambda}(x,D_x)b_x(x)\p_x
  +
  \lambda(x,D_x)c(x),
\nonumber
\\
  \langle{D_x}\rangle^{-1}L
& =
  (\p_t+\p_x^3+\sqrt{-1}\p_xa(x)\p_x)\langle{D_x}\rangle^{-1}
  +r_2(x,D_x)
\label{equation:hikaru}
\\
  r_2(x,D_x)
& =
  [\langle{D_x}\rangle^{-1},\sqrt{-1}\p_xa(x)\p_x]
  +
  \langle{D_x}\rangle^{-1}\bigl(b_x(x)\p_x+c(x)\bigr),
\nonumber
\end{align}
where $\p_t=\p/\p{t}$ and $\p_x=\p/\p{x}$. 
$r_1(x,D_x)$ and $r_2(x,D_x)$ are $L^2$-bounded pseudodifferential operators. 
We remark that 
$$
-[\tilde{\lambda}(x,D_x),\p_x^3]=-b_x(x)\p_x+r_3(x,D_x),
$$
$$
r_3(x,D_x)
=
b_x(x)\p_x(1-p(D_x)D_x)
-
b_{xx}(x)p(D_x)D_x
+
\frac{\sqrt{-1}}{3}b_{xxx}(x)p(D_x), 
$$
and $r_3(x,D_x)$ is also an $L^2$-bounded pseudodifferential operator. 
Set $r_4=r_1+r_3$ for short. 
Then, \eqref{equation:saotome} becomes 
\begin{equation}
\lambda(x,D_x)L
=
(\p_t+\p_x^3+\sqrt{-1}\p_xa(x)\p_x)\lambda(x,D_x)
+
r_4(x,D_x). 
\label{equation:otome} 
\end{equation}
Fix arbitrary $T>0$. 
Suppose that 
$U{\in}C([0,T];H^3(\mathbb{T})){\cap}C^1([0,T];L^2(\mathbb{T}))$. 
By using \eqref{equation:hikaru} and \eqref{equation:otome}, 
one can easily show that there exists a positive constant $C_0$ 
depending on $a$, $b$, $c$ and $p$ such that 
$$
\frac{d\mathcal{N}(U(t))^2}{dt}
\leqslant
C_0
\left(
\mathcal{N}(U(t))
+
\mathcal{N}(LU(t))
\right)
\mathcal{N}(U(t)),
$$
which implies a desired energy inequality
$$
\lVert{U(t)}\rVert
\leqslant
C_1
\left\{
\lVert{U(0)}\rVert
+
\int_0^t
\lVert{LU(s)}\rVert
ds
\right\}
\quad\text{for}\quad
t\in[0,T], 
$$
where $C_1$ is a positive constant 
depending only on $a$, $b$, $c$ and $p$. 
\end{proof}
%
%
\section{Proof of Theorem~\ref{theorem:main}}
\label{section:proof}
We shall prove Theorem~\ref{theorem:main} by the uniform energy
estimates of solutions to the initial value problem for 
semilinear parabolic equations of the form
\begin{align}
  u^\ep_t
  =
  -
  \ep\nabla_x^3u^\ep_x
  +
  a\nabla_x^2u^\ep_x
  +
  J_{u^\ep}\nabla_xu^\ep_x
  +
  bh(u^\ep_x,u^\ep_x)u^\ep_x
& \quad\text{in}\quad
  (0,\infty){\times}\mathbb{T},
\label{equation:pde-ep}
\\
  u^\ep(0,x)
  =
  u_0(x)
& \quad\text{in}\quad
  \mathbb{T},
\label{equation:data-ep}
\end{align}
where $\ep\in(0,1]$ is a parameter. 
The existence of solutions to 
\eqref{equation:pde-ep}-\eqref{equation:data-ep} 
was proved as follows.
\begin{lemma}[{\cite[Proposition~3.1]{onodera1}}]
\label{theorem:onnagurui}
Let $k$ be a positive integer satisfying $k\geqslant2$. 
Then, for any $u_0{\in}H^{k+1}(\mathbb{T};TN)$, there exists 
$T_\ep=T(\ep,\lVert{u}\rVert_{H^{3}})>$ such that 
{\rm \eqref{equation:pde-ep}-\eqref{equation:data-ep}} 
possesses a unique solution 
$u^\ep{\in}C([0,T_\ep];H^{k+1}(\mathbb{T};TN))$.  
\end{lemma}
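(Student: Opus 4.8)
The plan is to treat \eqref{equation:pde-ep} as a genuinely parabolic semilinear system and to prove local well-posedness by the standard three-step scheme: an extrinsic reformulation via an isometric embedding, a contraction argument based on the smoothing of the fourth-order heat semigroup, and a parabolic energy bootstrap that propagates higher regularity. Since $\ep>0$ is fixed, the dissipative term $-\ep\nabla_x^3u^\ep_x=-\ep\nabla_x^4u^\ep$ dominates every other term in \eqref{equation:pde-ep}; in particular the loss of one derivative caused by $\nabla_xJ_u$, which is the real obstruction in Theorem~\ref{theorem:main}, is completely harmless here. The substantive work is therefore ordinary parabolic local theory together with the verification that the constructed map stays on $N$.

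First I would fix the isometric embedding $w\in C^\infty(N;\mathbb{R}^d)$ given by the Nash embedding theorem and derive the equation satisfied by $v=w\circ u^\ep:\mathbb{T}\to\mathbb{R}^d$. Writing $P(v)$ for the orthogonal projection of $\mathbb{R}^d$ onto the tangent space of $w(N)$ and using the Gauss formula $\nabla_xu^\ep_x=P(v)\,\p_x^2v$, an induction shows $\nabla_x^4u^\ep=P(v)\,\p_x^4v+(\text{terms of order }\leqslant 3)$. The naive principal part $-\ep P(v)\,\p_x^4$ is degenerate in the normal directions, so before doing any analysis I would remove this degeneracy using the constraint $v\in w(N)$: differentiating the identities $\langle\p_xv,\nu_\alpha(v)\rangle=0$ for a local normal frame $\nu_\alpha$ three times expresses the normal component $P(v)^\perp\p_x^4v$ through $v,\p_xv,\p_x^2v,\p_x^3v$ alone. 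Substituting $-\ep P(v)\p_x^4v=-\ep\,\p_x^4v+\ep P(v)^\perp\p_x^4v$ then yields the \emph{genuinely nondegenerate} equation $v_t=-\ep\,\p_x^4v+f(v,\p_xv,\p_x^2v,\p_x^3v)$, where $f$ is smooth and of order at most three. Finally I would extend $f$ off $w(N)$ to a tubular neighborhood through the nearest-point projection, so that $f$ becomes globally defined on $\mathbb{R}^d$-valued data.

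Next I would pass to the mild formulation $v(t)=e^{-\ep t\p_x^4}v_0+\int_0^te^{-\ep(t-s)\p_x^4}f\bigl(v(s),\p_xv(s),\p_x^2v(s),\p_x^3v(s)\bigr)\,ds$ and run a contraction mapping in $C([0,T_\ep];H^3(\mathbb{T};\mathbb{R}^d))$. The space $H^3$ is convenient because $H^3(\mathbb{T})$ is a Banach algebra continuously embedded in $C^2(\mathbb{T})$, so $f$ is locally Lipschitz from $H^3$ into $L^2$; the analytic semigroup generated by $-\ep\p_x^4$ smooths at the parabolic rate, so that $e^{-\ep\tau\p_x^4}\colon L^2\to H^3$ costs only the integrable factor $(\ep\tau)^{-3/4}$, which exactly compensates the three derivatives lost in $f$. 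A routine estimate makes the Duhamel map a contraction on a small ball for some $T_\ep$ depending only on $\ep$ and $\lVert v_0\rVert_{H^3}=\lVert u_0\rVert_{H^3}$, and uniqueness in $C([0,T_\ep];H^3)$ follows from the same Lipschitz bound and Gronwall's inequality. To reach $u_0\in H^{k+1}$ I would propagate regularity by higher-order energy estimates for the $v$-equation, differentiating it up to $k+1$ times and pairing with the corresponding derivatives of $v$ in $L^2$; because the dissipation gains two derivatives over the third-order nonlinearity $f$, every top-order term is absorbed by interpolation and Young's inequality, with constants controlled by $\lVert v\rVert_{C^2}\lesssim\lVert v\rVert_{H^3}$. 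Since this quantity is already bounded on $[0,T_\ep]$, the $H^{k+1}$ regularity is propagated on the same interval without any further shrinking of $T_\ep$.

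It remains to check that $v(t,x)\in w(N)$, so that $u^\ep=w^{-1}\circ v$ is an honest $N$-valued solution of \eqref{equation:pde-ep}, whose pullback Sobolev norm is exactly the one used to define $H^{k+1}(\mathbb{T};TN)$. For this I would let $\Phi\colon\mathbb{R}^d\to\mathbb{R}^{d-2n}$ be a submersion with $w(N)=\Phi^{-1}(0)$ near the image and set $\phi(t,x)=\Phi(v(t,x))$, so that $\phi(0)\equiv0$. Since the geometric right-hand side is tangent to $w(N)$ whenever $v\in w(N)$, the composition of $\Phi$ with the extended right-hand side vanishes to first order on $\{\phi=0\}$; using $\p_x^4\phi=D\Phi(v)\,\p_x^4v+(\text{lower order})$ one finds that $\phi$ itself solves a linear parabolic system $\phi_t=-\ep\,\p_x^4\phi+(\text{coefficients})\cdot(\phi,\p_x\phi,\p_x^2\phi,\p_x^3\phi)$ with vanishing initial data, whence $\phi\equiv0$ by the uniqueness already established. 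I expect this last geometric bookkeeping—organizing the tangential/normal splitting of the fourth-order operator so that the defect equation genuinely closes—to be the \emph{main obstacle}, the functional-analytic part being routine once the principal part has been made nondegenerate.
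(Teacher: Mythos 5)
Your plan is correct and follows essentially the same route as the paper, which simply cites \cite[Proposition~3.1]{onodera1} and sketches exactly this argument: push the equation into $\mathbb{R}^d$ via the Nash embedding, construct a solution taking values in a small tubular neighborhood of $w(N)$ by a contraction argument exploiting the parabolic smoothing of $-\ep\p_x^4$, and then verify that the solution remains on $w(N)$. Your defect-equation/uniqueness step is the same device the paper refers to as ``some kind of maximum principle,'' so there is nothing genuinely different to compare.
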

The proof of Lemma~\ref{theorem:onnagurui} given in \cite{onodera1} 
does not depend on the K\"ahler condition at all. 
Lemma~\ref{theorem:onnagurui} is proved by the standard arguments: 
the contraction mapping theorem and some kind of maximum principle. 
Firstly, we push forward
\eqref{equation:pde-ep}-\eqref{equation:data-ep} 
into $\mathbb{R}^d$ by the Nash embedding $w$, 
and construct a solution taking values 
in a small tubular neighborhood of $w(N)$. 
Secondly, we check that the value of the solution remains in $w(N)$. 
See \cite[Section~3]{onodera1} for the detail. 
\par
We split the proof of Theorem~\ref{theorem:main} into three steps. 
Firstly, we construct a solution by 
the uniform energy estimates and the standard compactness argument. 
Secondly, we check the uniqueness of solutions. 
Finally, we recover the continuity in time of solutions. 
\begin{proof}[Construction of Solutions]
Let $u^\ep$ be a unique solution to 
\eqref{equation:pde-ep}-\eqref{equation:data-ep} 
with a parameter $\ep\in(0,1]$. 
It suffices to show that 
there exists $T>0$ which is independent of $\ep\in(0,1]$,  
such that $\{u^\ep\}_{\ep\in(0,1]}$ is bounded in 
$L^\infty(0,T;H^{k+1}(\mathbb{T};TN))$, 
which is the set of all $H^{k+1}$-valued 
essentially bounded functions on $(0,T)$. 
Indeed, if this is true, then the standard compactness argument 
shows that there exist $u$ 
and a subsequence $\{u^\ep\}_{\ep\in(0,1]}$ such that 
\begin{align*}
  u^\ep \longrightarrow u
& \quad\text{in}\quad
  C([0,T];H^k(\mathbb{T};TN)),
\\
  u^\ep \longrightarrow u
& \quad\text{in}\quad
  L^\infty(0,T;H^{k+1}(\mathbb{T};TN))
  \quad\text{weakly star},
\end{align*}
as $\ep\downarrow0$, 
and $u$ solves \eqref{equation:pde}-\eqref{equation:data} 
and is $H^{k+1}$-valued weakly continuous in time. 
\par
Set $u=u^\ep$ for short. 
Any confusion will not occur. 
We actually evaluate 
$$
\mathcal{N}_{k+1}(u)^2
=
\lVert{u}\rVert_{H^k}^2
+
\lVert{\Lambda\nabla_x^ku_x}\rVert^2, 
$$
where 
$\Lambda=\Lambda_\ep(t,x,u)$ 
is a bounded pseudodifferential operator acting on 
$\Gamma((u^\ep)^{-1}TN)$ defined later, 
and $\lVert\cdot\rVert$ is a norm of $L^2(\mathbb{T};TN)$ defined by 
$$
\lVert{V}\rVert^2
=
\int_\mathbb{T}
h(V,V)dx
\quad\text{for}\quad
V:\mathbb{T}\rightarrow{TN}. 
$$
Set 
$$
T_\ep^\ast
=
\sup\{
T>0
\ \vert \ 
\mathcal{N}_{k+1}(u(t))\leqslant2\mathcal{N}_{k+1}(u_0)
\ \text{for}\ 
t\in[0,T]
\}.
$$
We need to compute 
$$
\nabla_x^{l+1}
\left(
u_t
+
\ep\nabla_x^3u_x
-
a\nabla_x^2u_x
-
J_u\nabla_xu_x
-
bh(u_x,u_x)u_x
\right)
=0, 
\quad
l=0,\dotsc,k.
$$
Main tools of the computation are 
\begin{align}
  \nabla_Xdu(Y)
& =
  \nabla_Ydu(X)
  +
  du([X,Y])
  =
  \nabla_Ydu(X),
\label{equation:commutator1}
\\
  \nabla_X\nabla_YV
& =
  \nabla_Y\nabla_XV
  +
  \nabla_{[X,Y]}V
  +
  R\bigl(du(X),du(Y)\bigr)V
\nonumber
\\
& =
  \nabla_Y\nabla_XV
  +
  R\bigl(du(X),du(Y)\bigr)V.
\label{equation:commutator2}
\end{align}
for $X,Y\in\{\p_t,\p_x\}$ and $V\in\Gamma(u^{-1}TN)$. 
We make use of basic techniques of geometric analysis of nonlinear problems. 
See \cite{nishikawa} for instance. 
\par
In view of \eqref{equation:commutator1} and \eqref{equation:commutator2}, 
we have 
\begin{align}
  \nabla_xu_t
& =
  \nabla_tu_x,
\label{equation:kojima1}
\\
  \nabla_x^2u_t
& =
  \nabla_t\nabla_xu_x
  +
  R(u_x,u_t)u_x,
\nonumber
\\
  \nabla_x^{l+1}u_t
& =
  \nabla_t\nabla_x^lu_x
  +
  \sum_{m=0}^{l-1}
  \nabla_x^{l-1-m}
  \left\{
  R(u_x,u_t)\nabla_x^mu_x
  \right\}
\nonumber
\\
& =
  \nabla_t\nabla_x^lu_x
  +
  \sum_{m=0}^{l-1}
  \nabla_x^{l-1-m}
\nonumber
\\
& \quad
  \times
  \left\{
  R\bigl(
  u_x,
  -
  \ep\nabla_x^3u_x
  +
  a\nabla_x^2u_x
  +
  J_u\nabla_xu_x
  +
  bh(u_x,u_x)u_x
  \bigr)
  \nabla_x^mu_x
  \right\}
\nonumber
\\
& =
  \nabla_t\nabla_x^lu_x
  +
  aR(u_x,\nabla_x^{l+1}u_x)
  -
  \ep
  P_{1,l+1}
  -
  Q_{1,l+1},
\label{equation:kojima2}
\\
  P_{1,l+1}
& =
  \sum_{m=0}^{l-1}
  \nabla_x^{l-1-m}
  \left\{
  R(u_x,\nabla_x^3u_x)\nabla_x^mu_x
  \right\},
\nonumber
\\
  Q_{1,l+1}
& =
  -a
  \sum_{m=0}^{l-1}
  \nabla_x^{l-1-m}
  \left\{
  R(u_x,\nabla_x^2u_x)\nabla_x^mu_x
  \right\}
  +
  aR(u_x,\nabla_x^{l+1}u_x)
\nonumber
\\
& \quad
  -
  \sum_{m=0}^{l-1}
  \nabla_x^{l-1-m}
  \left\{
  R\bigl(
  u_x,
  J_u\nabla_xu_x
  +
  bh(u_x,u_x)u_x
  \bigr)
  \nabla_x^mu_x
  \right\}. 
\nonumber
\end{align}
The Sobolev embeddings show that 
\begin{equation}
\lVert{P_{1,l+1}}\rVert
\leqslant
C_k
\lVert{u}\rVert_{H^{l+3}},
\quad
\lVert{Q_{1,l+1}}\rVert
\leqslant
C_k
\lVert{u}\rVert_{H^{l+1}}
\label{equation:kojima3}
\end{equation}
for $t\in[0,T_\ep^\ast]$, 
where $C_k>1$ is a constant depending only on 
$a$, $b$ and $\lVert{u_0}\rVert_{H^{k+1}}$ and not on $\ep\in(0,1]$. 
Such constants are denoted by the same notation $C_k$ below. 
Using \eqref{equation:commutator1} and \eqref{equation:commutator2} again, 
we have 
\begin{align}
  \nabla_x^{l+1}(J_u\nabla_xu_x)
& =
  \nabla_xJ_u\nabla_x\nabla_x^lu_x
  +
  l(\nabla_xJ_u)\nabla_x\nabla_x^lu_x
  +
  Q_{2,l+1},
\label{equation:kojima4}
\\
  \nabla_x^{l+1}\bigl\{h(u_x,u_x)u_x\bigr\}
& =
  h(u_x,u_x)\nabla_x\nabla_x^lu_x
  +
  2\bigl\{h(\nabla_x^lu_x,u_x)\bigr\}_xu_x
  +
  Q_{3,l+1},
\label{equation:kojima5}
\end{align}
\begin{align*}
  Q_{2,l+1}
& =
  \sum_{m=0}^{l-1}
  \frac{(l+1)!}{m!(l+1-m)!}
  (\nabla_x^{l+1-m}J_u)
  \nabla_x^{m+1}u_x,
\\
  Q_{3,l+1}
& =
  \sum_{\substack{\alpha+\beta+\gamma=l+1 \\ \alpha,\beta,\gamma\leqslant{l}}}
  \frac{(l+1)!}{\alpha!\beta!\gamma!}
  h(\nabla_x^\alpha{u_x},\nabla_x^\beta{u_x})\nabla_x^\gamma{u_x}, 
  -
  2h(\nabla_x^lu_x,\nabla_xu_x)u_x.
\end{align*}
$Q_{2,l+1}$ and $Q_{3,l+1}$ have the same estimates as $Q_{1,l+1}$. 
Combining 
\eqref{equation:kojima1}, 
\eqref{equation:kojima2}, 
\eqref{equation:kojima3}, 
\eqref{equation:kojima4} 
and 
\eqref{equation:kojima5}, 
we obtain 
\begin{align}
& \bigl\{
  \nabla_t
  +
  \ep\nabla_x^4
  -
  a\nabla_x^3
  -
  \nabla_xJ_u\nabla_x
  -
  l(\nabla_xJ_u)\nabla_x
  -
  bh(u_x,u_x)\nabla_x  
  \bigr\}\nabla_x^lu_x
\nonumber
\\
  =
& -
  aR(u_x,\nabla_x^{l+1}u_x)u_x
  +
  2b\bigl\{h(\nabla_x^lu_x,u_x)\bigr\}_xu_x
  +
  {\ep}P_{l+1}
  +
  Q_{l+1},
\label{equation:kojima6}
\end{align}
\begin{equation}
\lVert{P_{l+1}}\rVert
\leqslant
C_k
\lVert{u}\rVert_{H^{l+3}},
\quad
\lVert{Q_{l+1}}\rVert
\leqslant
C_k
\lVert{u}\rVert_{H^{l+1}}
\quad\text{for}\quad
t\in[0,T_\ep^\ast].
\label{equation:kojima7} 
\end{equation}
\par
By using \eqref{equation:kojima6}, we have 
\begin{align}
  \frac{d}{dt}
  \lVert{u}\rVert_{H^k}^2
  =
& 2
  \sum_{l=0}^{k-1}
  \int_\mathbb{T}
  h(\nabla_t\nabla_x^lu_x,\nabla_x^lu_x)
  dx
\nonumber
\\
  =
& -2\ep
  \sum_{l=0}^{k-1}
  \int_\mathbb{T}
  h(\nabla_x^4\nabla_x^lu_x,\nabla_x^lu_x)
  dx
\label{equation:yoshio1}
\\
& +2a
  \sum_{l=0}^{k-1}
  \int_\mathbb{T}
  h(\nabla_x^3\nabla_x^lu_x,\nabla_x^lu_x)
  dx
\label{equation:yoshio2}
\\
& +2
  \sum_{l=0}^{k-1}
  \int_\mathbb{T}
  h(\nabla_xJ_u\nabla_x\nabla_x^lu_x,\nabla_x^lu_x)
  dx
\label{equation:yoshio3}
\\
& +2
  \sum_{l=0}^{k-1}
  \int_\mathbb{T}
  h\bigl((\nabla_xJ_u)\nabla_x\nabla_x^lu_x,\nabla_x^lu_x\bigr)
  dx
\label{equation:yoshio4}
\\
& +2b
  \sum_{l=0}^{k-1}
  \int_\mathbb{T}
  h(u_x,u_x)
  h(\nabla_x\nabla_x^lu_x,\nabla_x^lu_x)
  dx
\label{equation:yoshio5}
\\
& -2a
  \sum_{l=0}^{k-1}
  \int_\mathbb{T}
  h\bigl(R(u_x,\nabla_x^{l+1}u_x)u_x,\nabla_x^lu_x\bigr)
  dx
\label{equation:yoshio6}
\\
& +4b
  \sum_{l=0}^{k-1}
  \int_\mathbb{T}
  \bigl\{h(\nabla_x^lu_x,u_x)\bigr\}_x
  h(u_x,\nabla_x^lu_x)
  dx
\label{equation:yoshio7}
\\
& +2
  \sum_{l=0}^{k-1}
  \int_\mathbb{T}
  h(\ep{P_{l+1}}+Q_{l+1},\nabla_x^lu_x)
  dx.
\label{equation:yoshio8}
\end{align}
Using integration by parts and the properties of $h$ and $J$, 
we deduce that 
\eqref{equation:yoshio1}, 
\eqref{equation:yoshio2}, 
\eqref{equation:yoshio3}, 
\eqref{equation:yoshio5}, 
\eqref{equation:yoshio7} 
respectively become 
\begin{align}
  \text{\eqref{equation:yoshio1}}
& =
  -2\ep
  \sum_{l=0}^{k-1}
  \int_\mathbb{T}
  h(\nabla_x^{l+2}u_x,\nabla_x^{l+2}u_x)
  dx,  
\label{equation:yoshio11}
\\
  \text{\eqref{equation:yoshio2}}
& =
  -2a
  \sum_{l=0}^{k-1}
  \int_\mathbb{T}
  h(\nabla_x\nabla_x^{l+1}u_x,\nabla_x^{l+1}u_x)
  dx
\nonumber
\\
& =
  -a
  \sum_{l=0}^{k-1}
  \int_\mathbb{T}
  \bigl\{h(\nabla_x^{l+1}u_x,\nabla_x^{l+1}u_x)\bigr\}_x
  dx
  =
  0,
\label{equation:yoshio12}
\\
  \text{\eqref{equation:yoshio3}}
& =
  -2
  \sum_{l=0}^{k-1}
  \int_\mathbb{T}
  h(J_u\nabla_x^{l+1}u_x,\nabla_x^{l+1}u_x)
  dx
  =0,
\label{equation:yoshio13}
\\
  \text{\eqref{equation:yoshio5}}
& =
  b
  \sum_{l=0}^{k-1}
  \int_\mathbb{T}
  h(u_x,u_x)
  \bigl\{h(\nabla_x^lu_x,\nabla_x^lu_x)\bigr\}_x
  dx
\nonumber
\\
& =
 -b
  \sum_{l=0}^{k-1}
  \int_\mathbb{T}
  \{h(u_x,u_x)\}_x
  h(\nabla_x^lu_x,\nabla_x^lu_x)
  dx,
\label{equation:yoshio05}
\\
  \text{\eqref{equation:yoshio7}}
& =
  2b
  \sum_{l=0}^{k-1}
  \int_\mathbb{T}
  \bigl\{h(\nabla_x^lu_x,u_x)^2\bigr\}_x
  dx
  =
  0. 
\label{equation:yoshio17}
\end{align}
Recall the property of the Riemannian curvature tensor $R$: 
$h(R(X,Y)Z,W)=h(R(Z,W)X,Y)$ for any vector fields $X,Y,X,W$ on $N$. 
Using this and integration by parts, we deduce 
\begin{align*}
  \text{\eqref{equation:yoshio6}}
& =
  -2a
  \sum_{l=0}^{k-1}
  \int_\mathbb{T}
  h\bigl(R(u_x,\nabla_x^lu_x)u_x,\nabla_x^{l+1}u_x\bigr)
  dx
\\
& =
  2a
  \sum_{l=0}^{k-1}
  \int_\mathbb{T}
  h\bigl(R(u_x,\nabla_x^{l+1}u_x)u_x,\nabla_x^lu_x\bigr)
  dx
\\
& +2a
  \sum_{l=0}^{k-1}
  \int_\mathbb{T}
  h\bigl((\nabla^NR)(u_x,u_x,\nabla_x^lu_x)u_x,\nabla_x^lu_x\bigr)
  dx
\\
& +2a
  \sum_{l=0}^{k-1}
  \int_\mathbb{T}
  h\bigl(R(\nabla_xu_x,\nabla_x^lu_x)u_x,\nabla_x^lu_x\bigr)
  dx
\\
& +2a
  \sum_{l=0}^{k-1}
  \int_\mathbb{T}
  h\bigl(R(u_x,\nabla_x^lu_x)\nabla_xu_x,\nabla_x^lu_x\bigr)
  dx,
\end{align*}
which implies 
\begin{align}
  \text{\eqref{equation:yoshio6}}
& =
  a
  \sum_{l=0}^{k-1}
  \int_\mathbb{T}
  h\bigl((\nabla^NR)(u_x,u_x,\nabla_x^lu_x)u_x,\nabla_x^lu_x\bigr)
  dx
\nonumber
\\
& +a
  \sum_{l=0}^{k-1}
  \int_\mathbb{T}
  h\bigl(R(\nabla_xu_x,\nabla_x^lu_x)u_x,\nabla_x^lu_x\bigr)
  dx
\nonumber
\\
& +a
  \sum_{l=0}^{k-1}
  \int_\mathbb{T}
  h\bigl(R(u_x,\nabla_x^lu_x)\nabla_xu_x,\nabla_x^lu_x\bigr)
  dx. 
\label{equation:yoshio06}
\end{align}
Applying the Schwarz inequality to 
\eqref{equation:yoshio05}, 
\eqref{equation:yoshio06} 
and 
\eqref{equation:yoshio8}, 
we have 
\begin{equation}
\lvert\text{\eqref{equation:yoshio5}}\rvert, 
\lvert\text{\eqref{equation:yoshio6}}\rvert
\leqslant
C_k\lVert{u}\rVert_{H^{k}}^2,
\label{equation:yoshio15}
\end{equation}
\begin{align}
  \lvert\text{\eqref{equation:yoshio8}}\rvert
& \leqslant
  C_k\ep\lVert{u}\rVert_{H^{k+2}}\lVert{u}\rVert_{H^k}
  +
  C_k\lVert{u}\rVert_{H^{k}}^2 
\nonumber
\\
& \leqslant
  2\ep
  \sum_{l=0}^{k-1}
  \int_\mathbb{T}
  h(\nabla_x^{l+2}u_x,\nabla_x^{l+2}u_x)
  dx
  +
  C_k\lVert{u}\rVert_{H^{k}}^2. 
\label{equation:yoshio18}
\end{align}
Similarly, \eqref{equation:yoshio4} is estimated as 
\begin{equation}
\lvert\text{\eqref{equation:yoshio4}}\rvert
\leqslant
C_k\lVert{u}\rVert_{H^{k+1}}\lVert{u}\rVert_{H^{k}},
\label{equation:yoshio14}
\end{equation}
Combining 
\eqref{equation:yoshio11}, 
\eqref{equation:yoshio12}, 
\eqref{equation:yoshio13}, 
\eqref{equation:yoshio17}, 
\eqref{equation:yoshio15}, 
\eqref{equation:yoshio18} 
and  
\eqref{equation:yoshio14}, 
we obtain
\begin{equation}
\frac{d}{dt}
\lVert{u}\rVert_{H^k}^2
\leqslant
C_k\lVert{u}\rVert_{H^{k+1}}\lVert{u}\rVert_{H^{k}}. 
\label{equation:shiota}
\end{equation}
\par
Next we estimate $\Lambda\nabla_x^ku_x$. 
Here we define the pseudodifferential operator $\Lambda$. 
Let $\{N_\alpha\}$ be 
the set of local coordinate neighborhood of $N$, 
and let $y_\alpha^1,\dotsc,y_\alpha^{2n}$ be the local coordinates of $N_\alpha$. 
Pick up a partition of unity $\{\Phi_\alpha\}$ subordinated to $\{N_\alpha\}$, 
and pick up $\{\Psi_\alpha\}{\subset}C^\infty_0(N)$ so that 
$$
\Psi_\alpha=1 
\quad\text{in}\quad
\operatorname{supp}[\Phi_\alpha], 
\quad
\operatorname{supp}[\Psi_\alpha]\subset{N_\alpha},
$$
where $C^\infty_0(N)$ is the set of 
all compactly supported $C^\infty$-functions on $N$. 
We define a properly supported pseudodifferential operator $\Lambda$ 
acting on $\Gamma(u^{-1}TN)$ by 
$$
\Lambda=1-\tilde{\Lambda}, 
\quad
\tilde{\Lambda}
=
\frac{\sqrt{-1}k}{3a}J_u
\sum_{\alpha}
\Phi_\alpha(u)p(D_x)\Psi_\alpha(u). 
$$
If 
$$
V(x)
=
\sum_{a=1}^{2n}
V^a(x)
\left(\frac{\p}{\p{y_\alpha^a}}\right)_u
\in\Gamma(u^{-1}TN)
$$ 
is supported in $u^{-1}(N_\alpha)$, then 
$$
\Phi_\alpha(u)p(D_x)V(x)
=
\sum_{a=1}^{2n}
\left\{\Phi_\alpha(u)p(D_x)V^a(x)\right\}
\left(\frac{\p}{\p{y_\alpha^a}}\right)_u
$$
is well-defined and supported in $u^{-1}(N_\alpha)$. 
Then, each term in $\tilde{\Lambda}$ can be treated 
as a pseudodifferential operator acting on $\mathbb{R}^d$-valued functions, 
and we can make use of pseudodifferential operators with nonsmooth symbols. 
In other words, we can deal with $\tilde{\Lambda}$ 
as if it were a pseudodifferential operator with a smooth symbol. 
See \cite[Section~2]{chihara1} and \cite{nagase} for the detail. 
Symbolic calculus below is valid 
since the Sobolev embedding shows that 
$u(t){\in}C^{4+\delta}(\mathbb{T})$ for $\delta\in(0,1/2)$. 
It is easy to see that there exists $C_k>1$ such that 
$$
C_k^{-1}\mathcal{N}_{k+1}(u)
\leqslant
\lVert{u}\rVert_{H^{k+1}}
\leqslant
C_k\mathcal{N}_{k+1}(u)
\quad\text{for}\quad
t\in[0,T_\ep^\ast].
$$
\par
We compute 
\begin{align*}
  0
& =
  \Lambda
  \nabla_x^{k+1}
  \left(
  u_t
  +
  \ep\nabla_x^3u_x
  -
  a\nabla_x^2u_x
  -
  J_u\nabla_xu_x 
  -
  bh(u_x,u_x)u_x
  \right)
\\
& =
  \Lambda
  \bigl\{
  \nabla_t
  +
  \ep\nabla_x^4
  -
  a\nabla_x^3
  -
  \nabla_xJ_u\nabla_x
  -
  k(\nabla_xJ_u)\nabla_x
  -
  bh(u_x,u_x)\nabla_x  
  \bigr\}\nabla_x^ku_x
\\
& -
  \Lambda
  \left\{
  -
  aR(u_x,\nabla_x^{k+1}u_x)u_x
  +
  2b\bigl\{h(\nabla_x^ku_x,u_x)\bigr\}_xu_x
  +
  {\ep}P_{k+1}
  +
  Q_{k+1}
  \right\}.
\end{align*}
A direct computation shows that 
\begin{equation}
\Lambda\nabla_t
=
\nabla_t\Lambda
-
\frac{\p\Lambda}{\p{t}}
=
\nabla_t\Lambda
+
\frac{\p\tilde{\Lambda}}{\p{t}}, 
\label{equation:opp1}
\end{equation}
$$
\left\lVert
\frac{\p\tilde{\Lambda}}{\p{t}}
\nabla_x^ku_x
\right\rVert
\leqslant
C_k\lVert{u}\rVert_{H^k}.
$$
Let $I_{2n}$ be the $2n\times2n$ identity matrix. 
If we use a local expression 
$\nabla_x^4=\p_x^4+A_3\p_x^3+A_2\p_x^2+A_1\p_x+A_0$ 
with $2n\times2n$ matrices $A_j$, $j=0,1,2,3$, 
we deduce that 
\begin{equation}
\ep\Lambda\nabla_x^4
=
\ep\nabla_x^4\Lambda
+
\ep
[\Lambda,\nabla_x^4]
=
\ep\nabla_x^4\Lambda
-
\ep
[\tilde{\Lambda},\nabla_x^4], 
\label{equation:opp2}
\end{equation}
$$
[\tilde{\Lambda},\nabla_x^4]
=
\left[
\frac{\sqrt{-1}k}{3a}J_up(D_x),I_{2n}\p_x^4+\dotsb
\right],
\quad
\lVert[\tilde{\Lambda},\nabla_x^4]\nabla_x^ku_x\rVert
\leqslant
C_k\lVert{u}\rVert_{H^{k+3}},
$$
since the matrices of principal symbols 
$J_up(D_x)$ and $\nabla_x^4$ commute with each other. 
Next computation is the most crucial part 
of the proof of Theorem~\ref{theorem:main}. 
In the same way as $\ep\Lambda\nabla_x^4$, we have 
$$
-a\Lambda\nabla_x^3
=
-a\nabla_x^3\Lambda
+
a[\tilde{\Lambda},\nabla_x^3]. 
$$
We see the commutator above in detail. 
A direct computation shows that  
\begin{align*}
  a[\tilde{\Lambda},\nabla_x^3]
& =
  \frac{\sqrt{-1}k}{3}
  \sum_{\alpha}
  J_u\Phi_\alpha(u)p(D_x)\Psi_\alpha(u)\nabla_x^3
\\
& -
  \frac{\sqrt{-1}k}{3}
  \sum_{\alpha}
  \nabla_x^3J_u\Phi_\alpha(u)p(D_x)\Psi_\alpha(u)
\\
& =
  \frac{\sqrt{-1}k}{3}
  \sum_{\alpha}
  J_u\Phi_\alpha(u)p(D_x)\nabla_x^3\Psi_\alpha(u)
\\
& -
  \frac{\sqrt{-1}k}{3}
  \sum_{\alpha}
  \nabla_x^3J_u\Phi_\alpha(u)p(D_x)\Psi_\alpha(u)
\\
& +
  \frac{\sqrt{-1}k}{3}
  \sum_{\alpha}
  J_u\Phi_\alpha(u)p(D_x)[\Psi_\alpha(u),\nabla_x^3]. 
\end{align*}
The last term above is a smoothing operator since 
$
\operatorname{supp}[\{\Psi_\alpha(u)\}_x]
\cap
\operatorname{supp}[\Phi_\alpha(u)]
=
\emptyset
$. 
If we compute the commutator in the framework of 
modulo $L^2$-bounded operators, we deduce   
\begin{align*}
  a[\tilde{\Lambda},\nabla_x^3]
& \equiv
  \frac{\sqrt{-1}k}{3}
  \sum_{\alpha}
  \left\{
  J_u\Phi_\alpha(u)p(D_x)\nabla_x^3
  -
  \nabla_x^3J_u\Phi_\alpha(u)p(D_x)
  \right\}
  \Psi_\alpha(u)
\\
& =
  \frac{\sqrt{-1}k}{3}
  \sum_{\alpha}
  J_u\Phi_\alpha(u)[p(D_x),\nabla_x^3]\Psi_\alpha(u)
\\
& -\sqrt{-1}k
  \sum_{\alpha}
  \bigl[\nabla_x\{J_u\Phi_\alpha(u)\}\bigr]
  \nabla_x^2p(D_x)\Psi_\alpha(u)
\\
& -\sqrt{-1}k
  \sum_{\alpha}
  \bigl[\nabla_x^2\{J_u\Phi_\alpha(u)\}\bigr]
  \nabla_xp(D_x)\Psi_\alpha(u)
\\
& \equiv
  -\sqrt{-1}k
  \sum_{\alpha}
  \bigl[\nabla_x\{J_u\Phi_\alpha(u)\}\bigr]
  \nabla_x^2p(D_x)\Psi_\alpha(u)
\\
& =
  k
  \sum_{\alpha}
  \bigl[\nabla_x\{J_u\Phi_\alpha(u)\}\bigr]
  \nabla_x^2\frac{p(D_x)}{\sqrt{-1}}\Psi_\alpha(u)
\\
& \equiv
  k
  \sum_{\alpha}
  \bigl[\nabla_x\{J_u\Phi_\alpha(u)\}\bigr]
  \nabla_x\Psi_\alpha(u)
\\
& =
  k
  \sum_{\alpha}
  \bigl[\nabla_x\{J_u\Phi_\alpha(u)\}\bigr]\nabla_x
\\
& =
  k(\nabla_xJ_u)\sum_{\alpha}\Phi_\alpha(u)\nabla_x
  +
  kJ_u\left\{\sum_{\alpha}\Phi_\alpha(u)\right\}_x\nabla_x
\\
& =
  k(\nabla_xJ_u)\nabla_x.
\end{align*}
Thus, 
\begin{equation}
-a\Lambda\nabla_x^3
\equiv
-a\nabla_x^3\Lambda
+
k(\nabla_xJ_u)\nabla_x
\label{equation:opp3} 
\end{equation}
modulo $L^2$-bounded operators. 
In the same way as above, we deduce 
\begin{align}
  \Lambda\nabla_xJ_u\nabla_x
& \equiv
  \nabla_xJ_u\nabla_x\Lambda, 
\label{equation:opp4}
\\
  -k\Lambda(\nabla_xJ_u)\nabla_x
& \equiv
  -k(\nabla_xJ_u)\nabla_x,
\label{equation:opp5}
\\
  -b\Lambda{h(u_x,u_x)\nabla_x}
& \equiv
  -bh(u_x,u_x)\nabla_x\Lambda
\label{equation:opp6} 
\end{align}
modulo $L^2$-bounded operators. 
By using $1=\Lambda+\tilde{\Lambda}$, we deduce 
\begin{align}
& -
  a\Lambda\left\{R(u_x,\nabla_x^{k+1}u_x)u_x\right\}
\nonumber
\\
  =
& -
  aR(u_x,\nabla_x^{k+1}u_x)u_x
  +
  a\tilde{\Lambda}\left\{R(u_x,\nabla_x^{k+1}u_x)u_x\right\}
\nonumber
\\
  =
& -
  aR(u_x,\nabla_x\Lambda\nabla_x^ku_x)u_x
  -
  aR(u_x,\nabla_x\tilde{\Lambda}\nabla_x^ku_x)u_x
\nonumber
\\
& +
  a\tilde{\Lambda}\nabla_x\left\{R(u_x,\nabla_x^ku_x)u_x\right\}
  -
  a\tilde{\Lambda}\left\{(\nabla^NR)(u_x,u_x,\nabla_x^ku_x)\right\}
\nonumber
\\
& -
  a\tilde{\Lambda}\left\{R(\nabla_xu_x,\nabla_x^ku_x)u_x\right\}
  -
  a\tilde{\Lambda}\left\{R(u_x,\nabla_x^ku_x)\nabla_xu_x\right\}
\nonumber
\\
  =
& -
  aR(u_x,\nabla_x\Lambda\nabla_x^ku_x)u_x
  +
  Q^\prime_{1,k+1},
\label{equation:opp7}
\end{align}
\begin{align}
& 2b
  \Lambda
  \bigl[
  \bigl\{
  h(\nabla_x^ku_x,u_x)
  \bigr\}_x
  u_x
  \bigr]
\nonumber
\\
  =
& 2b
  \bigl\{
  h(\nabla_x^ku_x,u_x)
  \bigr\}_x
  u_x
  -
  2b
  \tilde{\Lambda}
  \bigl[
  \bigl\{
  h(\nabla_x^ku_x,u_x)
  \bigr\}_x
  u_x
  \bigr]
\nonumber
\\
  =
& 2b
  \bigl\{
  h(\Lambda\nabla_x^ku_x,u_x)
  \bigr\}_x
  u_x
  +
  2b
  \bigl\{
  h(\tilde{\Lambda}\nabla_x^ku_x,u_x)
  \bigr\}_x
  u_x
\nonumber
\\
& -
  2b
  \tilde{\Lambda}
  \nabla_x
  \bigl\{
  h(\nabla_x^ku_x,u_x)u_x
  \bigr\}
  +
  2b
  \tilde{\Lambda}
  \bigl\{
  h(\nabla_x^ku_x,u_x)
  \nabla_xu_x
  \bigr\}
\nonumber
\\
  =
& 2b
  \bigl\{
  h(\Lambda\nabla_x^ku_x,u_x)
  \bigr\}_x
  u_x
  +
  Q^\prime_{2,k+1},
\label{equation:opp8}
\end{align}
$$
\lVert{Q^\prime_{1,k+1}}\rVert, 
\lVert{Q^\prime_{2,k+1}}\rVert 
\leqslant 
C_k
\lVert{u}\rVert_{H^{k+1}}.
$$
Combining 
\eqref{equation:opp1}, 
\eqref{equation:opp2}, 
\eqref{equation:opp3}, 
\eqref{equation:opp4}, 
\eqref{equation:opp5}, 
\eqref{equation:opp6}, 
\eqref{equation:opp7} 
and 
\eqref{equation:opp8}, 
we obtain
\begin{align}
& \bigl\{
  \nabla_t
  +
  \ep\nabla_x^4
  -
  a\nabla_x^3
  -
  \nabla_xJ_u\nabla_x
  -
  bh(u_x,u_x)\nabla_x  
  \bigr\}
  \Lambda\nabla_x^ku_x
\nonumber
\\
  =
& -
  aR(u_x,\nabla_x\Lambda\nabla_x^ku_x)u_x
  +
  2b\bigl\{h(\Lambda\nabla_x^ku_x,u_x)\bigr\}_xu_x
  +
  {\ep}P^\prime_{k+1}
  +
  Q^\prime_{k+1},
\label{equation:lastapia1}
\end{align}
\begin{equation}
\lVert{P^\prime_{l+1}}\rVert
\leqslant
C_k
\lVert{u}\rVert_{H^{k+3}},
\quad
\lVert{Q^\prime_{k+1}}\rVert
\leqslant
C_k
\lVert{u}\rVert_{H^{k+1}}
\quad\text{for}\quad
t\in[0,T_\ep^\ast].
\label{equation:lastapia2} 
\end{equation}
Here we remark that 
$-k(\nabla_xJ_u)\nabla_x$ 
is canceled out in the left hand side of \eqref{equation:lastapia1} 
by $a[\tilde{\Lambda},\nabla_x^3]$. 
By computations similar to 
\eqref{equation:yoshio11}, 
\eqref{equation:yoshio12}, 
\eqref{equation:yoshio13}, 
\eqref{equation:yoshio17}, 
\eqref{equation:yoshio15}, 
\eqref{equation:yoshio18} 
and not to \eqref{equation:yoshio14}, 
we can deduce from 
\eqref{equation:lastapia1} and \eqref{equation:lastapia2} that 
\begin{equation}
\frac{d}{dt}\lVert\Lambda\nabla_x^ku_x\rVert^2
\leqslant
C_k\mathcal{N}_{k+1}(u)^2.
\label{equation:reiko} 
\end{equation}
Combining \eqref{equation:shiota} and \eqref{equation:reiko}, we obtain
\begin{equation}
\frac{d}{dt}\mathcal{N}_{k+1}(u)
\leqslant
C_k
\mathcal{N}_{k+1}(u) 
\quad\text{for}\quad
t\in[0,T_\ep^\ast].
\label{equation:joe}
\end{equation}
If we take $t=T_\ep^\ast$, then we have 
$2\mathcal{N}_{k+1}(u_0)\leqslant\mathcal{N}_{k+1}(u_0)e^{C_kT_\ep^\ast}$, 
which implies 
$T_\ep^\ast{\geqslant}T=\log2/C_k>0$. 
\par
Thus $\{u^\ep\}_{\ep{\in(0,1]}}$ is bounded in 
$L^\infty(0,T;H^{k+1}(\mathbb{T};TN))$. 
This completes the proof. 
\end{proof}
\begin{proof}[Uniqueness of Solutions] 
The uniqueness of solutions was proved in \cite[Section~5]{onodera1}. 
The proof given there does not depend on 
the K\"ahler condition at all. 
We prove the uniqueness by $H^1$-energy estimates of 
the difference of two solutions 
with the same initial data in $\mathbb{R}^d$. 
The symmetry of the second fundamental form of the mapping 
$w{\circ}u$ plays a crucial role. 
See \cite[Section~5]{onodera1} for the detail. 
\end{proof}
\begin{proof}[Recovery of Continuity in Time] 
Let $u{\in}L^\infty(0,T;H^{k+1}(\mathbb{T};TN))$ 
be the unique solution to \eqref{equation:pde}-\eqref{equation:data}. 
Following \cite[Section~3]{chihara3}, 
we prove that $\nabla_x^ku_x$ is strongly continuous in time. 
We remark that we have already known that 
$u{\in}C([0,T];H^k(\mathbb{T};TN))$ 
and  $\nabla_x^ku_x$ is a weakly continuous 
$L^2(\mathbb{T};TN)$-valued function on $[0,T]$. 
We identify $N$ and $w(N)$ below. 
Let $\{u^\ep\}_{\ep\in(0,1]}$ be a sequence of solutions 
to \eqref{equation:pde-ep}-\eqref{equation:data-ep}, 
which approximates $u$. 
We can easily check that for any $\phi{\in}C^\infty([0,T]\times\mathbb{T};\mathbb{R}^d)$, 
\begin{align*}
  \Lambda_\ep^\ast\phi \longrightarrow \Lambda^\ast\phi
& \quad\text{in}\quad
  L^2((0,T){\times}\mathbb{T};\mathbb{R}^d),
\\
  \Lambda_\ep\nabla_x^ku_x^\ep \longrightarrow \tilde{u}
& \quad\text{in}\quad
  L^2((0,T){\times}\mathbb{T};\mathbb{R}^d)
  \quad\text{weakly star},
\end{align*}
as $\ep\downarrow0$ with some $\tilde{u}$. 
Then, $\tilde{u}=\Lambda\nabla_x^ku_x$ in the sense of distributions. 
We denote by $\mathscr{L}(\mathscr{H})$ 
the set of all bounded linear operators 
of a Hilbert space $\mathscr{H}$ to itself. 
The time-continuity of $\nabla_x^ku_x$ is equivalent to 
that of $\Lambda\nabla_x^ku_x$ since 
$\Lambda{\in}C([0,T];\mathscr{L}(L^2(\mathbb{T};\mathbb{R}^d)))$. 
\par
It suffices to show that  
\begin{equation}
\lim_{t\downarrow0}
\Lambda(t)\nabla_x^ku_x(t)
=
\Lambda(0)\nabla_x^ku_{0x}
\quad\text{in}\quad
L^2(\mathbb{T};\mathbb{R}^d),
\label{equation:gillian}  
\end{equation}
since the other cases can be proved in the same way. 
\eqref{equation:joe} and the lower semicontinuity of $L^2$-norm imply 
$$
\sum_{l=0}^{k-1}
\lVert{\nabla_x^lu_x(t)}\rVert^2
+
\lVert{\Lambda(t)\nabla_x^ku_x(t)}\rVert^2
\leqslant
\sum_{l=0}^{k-1}
\lVert{\nabla_x^lu_{0x}}\rVert^2
+
\lVert{\Lambda(0)\nabla_x^ku_{0x}}\rVert^2
+
C_k\mathcal{N}_{k+1}(u_0)^2t
$$
provided that $\ep\downarrow0$. 
Letting $t\downarrow0$, we have 
$$
\limsup_{t\downarrow0}
\lVert{\Lambda(t)\nabla_x^ku_x(t)}\rVert^2
\leqslant
\lVert{\Lambda(0)\nabla_x^ku_{0x}}\rVert^2
$$
which implies \eqref{equation:gillian}. 
This completes the proof.
\end{proof}
%
%

\end{document}